\newtheorem{theorem}{Theorem}[section]
\newtheorem{lemma}[theorem]{Lemma}
\newtheorem{proposition}[theorem]{Proposition}
\theoremstyle{definition}
\newtheorem{definition}[theorem]{Definition}
\newtheorem{remark}[theorem]{Remark}
\newtheorem{example}[theorem]{Example}
\theoremstyle{remark}
\numberwithin{equation}{section}
\begin{document}

\title[On differential smoothness of AS-regular algebras of dimension $5$]{On differential smoothness of certain \\ Artin-Schelter regular algebras of dimension $5$}


\author{Andr\'es Rubiano}
\address{Universidad ECCI}
\curraddr{Campus Universitario}
\email{arubianos@ecci.edu.co}
\thanks{}


\thanks{}

\subjclass[2020]{16E45, 16S30, 16S32, 16S36, 16T05, 16W50, 18G10, 58B34}

\keywords{Differentially smooth algebra, integrable calculus, skew polynomial ring, Graded Clifford algebras, Artin-Schelter regular, graded Lie algebra}

\date{}

\dedicatory{Dedicated to professor Armando Reyes}

\begin{abstract} 

This article investigates the differential smoothness of various five-dimensional Artin-Schelter regular algebras. By analyzing the relationship between the number of generators and the Gelfand-Kirillov dimension, we provide structural obstructions to differential smoothness in specific algebraic families. 
\end{abstract}

\maketitle


\section{Introduction}

Artin-Schelter regular algebras (AS-regular for short) were introduced by Artin and Schelter in their seminal work \cite{ArtinSchelter1987} as noncommutative analogues of polynomial rings, defined by three homological conditions: finite global dimension, finite Gelfand-Kirillov dimension, and the Gorenstein property. Since then, AS-regular algebras have become central objects in noncommutative algebraic geometry, serving as candidates for coordinate rings of noncommutative projective spaces.

Rogalski \cite{Rogalski2023} summarized and systematized the progress on the classification of connected graded AS-regular algebras of low global dimension. In particular, he provided a unifying exposition of the cases of dimensions $1$, $2$, and $3$, emphasizing their role as building blocks for the higher-dimensional theory. The classification can be summarized as follows: in dimension $1$, the only AS-regular algebra is the polynomial ring $\Bbbk[x]$; in dimension $2$, there are two families, namely the \emph{quantum plane} $A_q=\Bbbk\{x,y\}/(yx-qxy)$ for $q\in \Bbbk^\times$, and the \emph{Jordan plane} $A_J=\Bbbk\{x,y\}/(yx-xy-x^2)$; in dimension $3$, the classification is richer, since AS-regular algebras generated in degree one fall into families determined by quadratic relations linked to elliptic curves and twisted homogeneous coordinate rings, as first established in the foundational works of Artin, Tate, and Van den Bergh. Rogalski also emphasized the geometric interpretation of these algebras through point schemes, which connect their defining relations to algebraic curves. These low-dimensional cases serve as prototypes for the more intricate situation in dimension $4$ and for the still open and challenging classification problem in dimension $5$.

Recent research has produced significant progress in the classification of four-dimensional AS-regular algebras. One major breakthrough came from Zhang and Zhang \cite{ZhangZhang2008, ZhangZhang2009}, who introduced the notion of \emph{double Ore extensions} and proved that any connected graded double Ore extension of an AS-regular algebra is again AS-regular. Using this construction, they obtained $26$ distinct families of AS-regular algebras of global dimension four, all of which are strongly Noetherian, Auslander-regular, Koszul, and Cohen-Macaulay, and not isomorphic to either iterated Ore or normal extensions of three-dimensional AS-regular algebras. 

The five-dimensional case remains significantly more intricate and has motivated several research programs. Notable contributions include Fløystad and Vatne’s classification of $5$-dimensional AS-regular algebras with two generators \cite{FloystadVatne2009}, Zhou and Lu constructed a strongly noetherian and Cohen-Macaulay families using $\mathbb{Z}^2$-gradings \cite{ZhouLu2013}, and Wang and Wu classified two-generator AS-regular algebras of global dimension five using $A_{\infty}$-algebra methods \cite{WangWu2012}. Vancliff \cite{Vancliff2024} exhibited a quadratic five-dimensional AS-regular algebra that surprisingly admits no point modules. These diverse examples, distinguished by their generating sets, relation structures, and homological properties, highlight the richness and complexity of the five-dimensional setting.

The notion of differential smoothness has emerged as a framework for studying noncommutative analogues of smooth manifolds. Initiated by Brzezi\'nski and Sitarz \cite{BrzezinskiSitarz2017}, differential smoothness is defined via differential graded algebras of fixed dimension admitting a noncommutative version of the Hodge star isomorphism and a Poincar\'e-type duality between differential and integral forms. This notion extends classical smoothness into the noncommutative realm, and has been successfully applied to a wide range of algebras, including quantum spheres, quantum tori, Ore extensions, and skew polynomial algebras \cite{Brzezinski2015, DuboisViolette1988, ReyesSarmiento2022}.

Recent work has further developed sufficient and necessary conditions for differential smoothness in explicit families of noncommutative algebras, including biquadratic PBW algebras, diffusion algebras, and double Ore extensions of type $(14641)$ \cite{RubianoReyes2024DSBiquadraticAlgebras, RubianoReyes2024DSDoubleOreExtensions}. These results demonstrate how smoothness can be analyzed in terms of presentations and the construction of differential calculi compatible with algebraic relations.

Motivated by these advances, the purpose of this paper is to study differential smoothness in the setting of five-dimensional AS-regular algebras. In particular, we investigate how the variety of relation types and generating sets arising in dimension five influences the existence of differential calculi that certify smoothness. Our approach extends methods previously applied in lower dimensions and in skew PBW contexts.

The paper is organized as follows. Section \ref{PreliminariesDifferentialsmoothnessofbi-quadraticalgebras} contains definitions and preliminaries on differential smoothness of algebras; we also set up notation necessary for the rest of the paper. We review the key facts on some AS-regular algebras of dimension $5$ in order to set up notation and render this paper self-contained. In Section \ref{DifferentialandintegralcalculusAS5} we prove our main results, Theorem \ref{NoDS}, which states that no algebra with fewer generators than its Gelfand-Kirillov dimension can be differentially smooth and Theorem \ref{NoClifford}, which asserts the existence of an AS-regular algebra of global dimension $5$ that is differentiably smooth; in particular, the associated graded Clifford algebra $C$.

Throughout the paper, $\mathbb{N}$ denotes the set of natural numbers including zero. The word ring means an associative ring with identity not necessarily commutative. $Z(R)$ denotes the center of the ring $R$. All vector spaces and algebras (always associative and with unit) are over a fixed field $\Bbbk$. $\Bbbk^{*}$ denotes the non-zero elements of $\Bbbk$. As usual, the symbols $\mathbb{R}$ and $\mathbb{C}$ denote the fields of real and complex numbers, respectively. 

\section{Preliminaries}\label{PreliminariesDifferentialsmoothnessofbi-quadraticalgebras}

\subsection{Differential smoothness of algebras}\label{DefinitionsandpreliminariesDSA}




We follow Brzezi\'nski and Sitarz's presentation on differential smoothness carried out in \cite[Section 2]{BrzezinskiSitarz2017} (c.f. \cite{Brzezinski2008, Brzezinski2014}).

\begin{definition}[{\cite[Section 2.1]{BrzezinskiSitarz2017}}]\label{defDGA}
\begin{enumerate}
    \item [\rm (i)] A {\em differential graded algebra} is a non-negatively graded algebra $\Omega$ with the product denoted by $\wedge$ together with a degree-one linear map $d:\Omega^{\bullet} \to \Omega^{\bullet +1}$ that satisfies the graded Leibniz's rule 
    \[
    d(ab)=(da)b+(-1)^{\bullet}adb,\ a \in \Omega^{\bullet}, b\in \Omega
    \]
    
     \noindent and is such that $d \circ d = 0$.
    
    \item [\rm (ii)] A differential graded algebra $(\Omega(A), d)$ is a {\em calculus over an algebra} $A$ if $\Omega^0 (A) = A$ and $\Omega^n (A) = A\ dA \wedge dA \wedge \dotsb \wedge dA$ ($dA$ appears $n$-times) for all $n\in \mathbb{N}$ (this last is called the {\em density condition}). We write $(\Omega (A), d)$ with $\Omega (A) = \bigoplus_{n\in \mathbb{N}} \Omega^{n}(A)$. By using the Leibniz's rule, it follows that $\Omega^n (A) = dA \wedge dA \wedge \dotsb \wedge dA\ A$. A differential calculus $\Omega (A)$ is called {\em connected} if ${\rm ker}(d\mid_{\Omega^0 (A)}) = \Bbbk$.
    
    \item [\rm (iii)] A calculus $(\Omega (A), d)$ is said to have {\em dimension} $n$ if $\Omega^n (A)\neq 0$ and $\Omega^m (A) = 0$ for all $m > n$. An $n$-dimensional calculus $\Omega (A)$ {\em admits a volume form} if $\Omega^n (A)$ is isomorphic to $A$ as a left and right $A$-module. 
\end{enumerate}
\end{definition}

\begin{remark}
It is important to note that the product $\wedge$ defined on $\Omega(A)$ is not the usual exterior product; rather, it is merely a notation for this product.
\end{remark}

The existence of a right $A$-module isomorphism means that there is a free generator, say $\omega$, of $\Omega^n (A)$ (as a right $A$-module), i.e. $\omega \in \Omega^n (A)$, such that all elements of $\Omega^n (A)$ can be uniquely expressed as $\omega a$ with $a \in A$. If $\omega$ is also a free generator of $\Omega^n (A)$ as a left $A$-module, this is said to be a {\em volume form} on $\Omega (A)$.

The right $A$-module isomorphism $\Omega^n (A) \to A$ corresponding to a volume form $\omega$ is denoted by $\pi_{\omega}$, i.e.
\begin{equation}\label{BrzezinskiSitarz2017(2.1)}
\pi_{\omega} (\omega a) = a, \quad {\rm for\ all}\ a\in A.
\end{equation}

\noindent By using that $\Omega^n (A)$ is also isomorphic to $A$ as a left $A$-module, any free generator $\omega $ induces an algebra endomorphism $\nu_{\omega}$ of $A$ by the formula
\begin{equation}\label{BrzezinskiSitarz2017(2.2)}
    a \omega = \omega \nu_{\omega} (a).
\end{equation}

\noindent Note that if $\omega$ is a volume form, then $\nu_{\omega}$ is an algebra automorphism.

Now, we proceed to recall the key ingredients of the {\em integral calculus} on $A$ as dual to its differential calculus. For more details, see Brzezinski et al. \cite{Brzezinski2008, BrzezinskiElKaoutitLomp2010}.

Let $(\Omega (A), d)$ be a differential calculus on $A$. The space of $n$-forms $\Omega^n (A)$ is an $A$-bimodule. Consider $\mathcal{I}_{n}A$ the right dual of $\Omega^{n}(A)$, that is, $\mathcal{I}_{n}A := {\rm Hom}_{A}(\Omega^{n}(A),A)$. Notice that each of the $\mathcal{I}_{n}A$ is an $A$-bimodule with the actions
\begin{align*}
    (a\cdot\phi\cdot b)(\omega)=a\phi(b\omega),\quad {\rm for\ all}\ \phi \in \mathcal{I}_{n}A,\ \omega \in \Omega^{n}(A)\ {\rm and}\ a,b \in A.
\end{align*}

\noindent The direct sum of all the $\mathcal{I}_{n}A$, that is, $\mathcal{I}A = \bigoplus\limits_{n} \mathcal{I}_n A$, is a right $\Omega (A)$-module with action given by
\begin{align}\label{BrzezinskiSitarz2017(2.3)}
    (\phi\cdot\omega)(\omega')=\phi(\omega\wedge\omega'),\quad {\rm for\ all}\ \phi\in\mathcal{I}_{n + m}A, \ \omega\in \Omega^{n}(A) \ {\rm and} \ \omega' \in \Omega^{m}(A).
\end{align}

\begin{definition}[{\cite[Definition 2.1]{Brzezinski2008}}]
A {\em divergence} (also called {\em hom-connection}) on $A$ is a linear map $\nabla: \mathcal{I}_1 A \to A$ such that
\begin{equation}\label{BrzezinskiSitarz2017(2.4)}
    \nabla(\phi \cdot a) = \nabla(\phi) a + \phi(da), \quad {\rm for\ all}\ \phi \in \mathcal{I}_1 A \ {\rm and} \ a \in A.
\end{equation}  
\end{definition}

\noindent Note that a divergence can be extended to the whole of $\mathcal{I}A$, 
\[
\nabla_n: \mathcal{I}_{n+1} A \to \mathcal{I}_{n} A,
\]

\noindent by considering
\begin{equation}\label{BrzezinskiSitarz2017(2.5)}
\nabla_n(\phi)(\omega) = \nabla(\phi \cdot \omega) + (-1)^{n+1} \phi(d \omega), \quad {\rm for\ all}\ \phi \in \mathcal{I}_{n+1}(A)\ {\rm and} \ \omega \in \Omega^n (A).
\end{equation}

\noindent By putting together (\ref{BrzezinskiSitarz2017(2.4)}) and (\ref{BrzezinskiSitarz2017(2.5)}), we get the Leibniz's rule 
\begin{equation}
    \nabla_n(\phi \cdot \omega) = \nabla_{m + n}(\phi) \cdot \omega + (-1)^{m + n} \phi \cdot d\omega,
\end{equation}

\noindent for all elements $\phi \in \mathcal{I}_{m + n + 1} A$ and $\omega \in \Omega^m (A)$ \cite[Lemma 3.2]{Brzezinski2008}. In the case $n = 0$, if ${\rm Hom}_A(A, M)$ is canonically identified with $M$, then $\nabla_0$ reduces to the classical Leibniz's rule.



$\mathcal{I} A$ together with the $\nabla_n$ form a chain complex called the {\em complex of integral forms} over $A$. The cokernel map of $\nabla$, that is, $\Lambda: A \to {\rm Coker} \nabla = A / {\rm Im} \nabla$ is said to be the {\em integral on $A$ associated to} $\mathcal{I}A$.

Given a left $A$-module $X$ with action $a\cdot x$, for all $a\in A,\ x \in X$, and an algebra automorphism $\nu$ of $A$, the notation $^{\nu}X$ stands for $X$ with the $A$-module structure twisted by $\nu$, i.e. with the $A$-action $a\otimes x \mapsto \nu(a)\cdot x $.

The following definition of an \textit{integrable differential calculus} seeks to portray a version of Hodge star isomorphisms between the complex of differential forms of a differentiable manifold and a complex of dual modules of it \cite[p. 112]{Brzezinski2015}. 

\begin{definition}[{\cite[Definition 2.1]{BrzezinskiSitarz2017}}]\label{defInt}
An $n$-dimensional differential calculus $(\Omega (A), d)$ is said to be {\em integrable} if $(\Omega (A), d)$ admits a complex of integral forms $(\mathcal{I}A, \nabla)$ for which there exist an algebra automorphism $\nu$ of $A$ and $A$-bimodule isomorphisms \linebreak $\Theta_k: \Omega^{k} (A) \to ^{\nu} \mathcal{I}_{n-k}A$, $k = 0, \dotsc, n$, rendering commmutative the following diagram:
\[
\begin{tikzcd}
A \arrow{r}{d} \arrow{d}{\Theta_0} & \Omega^{1} (A) \arrow{d}{\Theta_1} \arrow{r}{d} & \Omega^2 (A)  \arrow{d}{\Theta_2} \arrow{r}{d} & \dotsb \arrow{r}{d} & \Omega^{n-1} (A) \arrow{d}{\Theta_{n-1}} \arrow{r}{d} & \Omega^n (A)  \arrow{d}{\Theta_n} \\ ^{\nu} \mathcal{I}_n A \arrow[swap]{r}{\nabla_{n-1}} & ^{\nu} \mathcal{I}_{n-1} A \arrow[swap]{r}{\nabla_{n-2}} & ^{\nu} \mathcal{I}_{n-2} A \arrow[swap]{r}{\nabla_{n-3}} & \dotsb \arrow[swap]{r}{\nabla_{1}} & ^{\nu} \mathcal{I}_{1} A \arrow[swap]{r}{\nabla} & ^{\nu} A
\end{tikzcd}
\]

\noindent The $n$-form $\omega:= \Theta_n^{-1}(1)\in \Omega^n (A)$ is called an {\em integrating volume form}. 
\end{definition}

The algebra of complex matrices $M_n(\mathbb{C})$ with the $n$-dimensional calculus generated by derivations presented by Dubois-Violette et al. \cite{DuboisViolette1988, DuboisVioletteKernerMadore1990}, the quantum group $SU_q(2)$ with the three-dimensional left covariant calculus developed by Woronowicz \cite{Woronowicz1987} and the quantum standard sphere with the restriction of the above calculus, are examples of algebras admitting integrable calculi. For more details on the subject, see Brzezi\'nski et al. \cite{BrzezinskiElKaoutitLomp2010}. 

The following proposition shows that the integrability of a differential calculus can be defined without explicit reference to integral forms. This allows us to guarantee the integrability by considering the existence of finitely generator elements that allow to determine left and right components of any homogeneous element of $\Omega(A)$.

\begin{proposition}\label{BrzezinskiSitarz2017Lemmas2.6and2.7}
\begin{enumerate}
\item [\rm (1)] \cite[Lemma 2.6]{BrzezinskiSitarz2017} Consider $(\Omega (A), d)$ an integrable and $n$-dimensional calculus over $A$ with integrating form $\omega$. Then $\Omega^{k} (A)$ is a finitely generated projective right $A$-module if there exist a finite number of forms $\omega_i \in \Omega^{k} (A)$ and $\overline{\omega}_i \in \Omega^{n-k} (A)$ such that, for all $\omega' \in \Omega^{k} (A)$, we have that 
\begin{equation*}
\omega' = \sum_{i} \omega_i \pi_{\omega} (\overline{\omega}_i \wedge \omega').
\end{equation*}

\item [\rm (2)] \cite[Lemma 2.7]{BrzezinskiSitarz2017} Let $(\Omega (A), d)$ be an $n$-dimensional calculus over $A$ admitting a volume form $\omega$. Assume that for all $k = 1, \ldots, n-1$, there exists a finite number of forms $\omega_{i}^{k},\overline{\omega}_{i}^{k} \in \Omega^{k}(A)$ such that for all $\omega'\in \Omega^k(A)$, we have that
\begin{equation*}
\omega'=\displaystyle\sum_i\omega_{i}^{k}\pi_\omega(\overline{\omega}_{i}^{n-k}\wedge\omega')=\displaystyle\sum_i\nu_{\omega}^{-1}(\pi_\omega(\omega'\wedge\omega_{i}^{n-k}))\overline{\omega}_{i}^{k},
\end{equation*}

\noindent where $\pi_{\omega}$ and $\nu_{\omega}$ are defined by {\rm (}\ref{BrzezinskiSitarz2017(2.1)}{\rm )} and {\rm (}\ref{BrzezinskiSitarz2017(2.2)}{\rm )}, respectively. Then $\omega$ is an integral form and all the $\Omega^{k}(A)$ are finitely generated and projective as left and right $A$-modules.
\end{enumerate}
\end{proposition}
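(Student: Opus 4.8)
The plan is to derive both statements from the standard \emph{dual basis lemma}: a right $A$-module $M$ is finitely generated and projective precisely when there exist finitely many elements $m_i\in M$ and right $A$-linear functionals $f_i\in\mathrm{Hom}_A(M,A)$ with $m=\sum_i m_i\,f_i(m)$ for every $m\in M$ (together with the obvious left-handed counterpart). The whole argument is then an exercise in recognizing the hypothesized expansions as instances of this criterion, supplemented by the elementary commutation identity relating $\pi_\omega$ and $\nu_\omega$ coming from \eqref{BrzezinskiSitarz2017(2.2)}.

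For part (1), I would fix $M=\Omega^k(A)$ as a right $A$-module, take the $\omega_i$ as candidate generators, and define $f_i\colon\Omega^k(A)\to A$ by $f_i(\omega')=\pi_\omega(\overline{\omega}_i\wedge\omega')$. The only point to verify is that each $f_i$ is right $A$-linear, which is immediate: $\wedge$ is right $A$-linear in its right argument and $\pi_\omega$ is a right $A$-module map by its definition in \eqref{BrzezinskiSitarz2017(2.1)}, so $f_i(\omega'a)=f_i(\omega')a$. The hypothesized identity $\omega'=\sum_i\omega_i\,f_i(\omega')$ is then exactly the dual basis condition, and the lemma yields that $\Omega^k(A)$ is finitely generated and projective as a right $A$-module.

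For part (2), I would first record the key identity $a\,\pi_\omega(\xi)=\pi_\omega(\nu_\omega^{-1}(a)\,\xi)$ for $a\in A$ and $\xi\in\Omega^n(A)$, obtained by writing $\xi=\omega b$ and invoking \eqref{BrzezinskiSitarz2017(2.2)}; note also that $\nu_\omega$ is an automorphism because $\omega$ is a volume form. The aim is to verify condition (4) of Proposition \ref{integrableequiva}, i.e. that the left multiplication maps $\ell^k_{\pi_\omega}\colon\Omega^k(A)\to\mathcal{I}_{n-k}A$ are bijective. Injectivity is the easy direction: if $\ell^k_{\pi_\omega}(\omega')=0$, then in particular $\pi_\omega(\omega'\wedge\omega_i^{n-k})=0$ for all $i$, and the second hypothesized expansion collapses to $\omega'=0$.

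Surjectivity is the step I expect to be the main obstacle, and it is where the twist by $\nu_\omega$ is essential. Given $\phi\in\mathcal{I}_{n-k}A=\mathrm{Hom}_A(\Omega^{n-k}(A),A)$, I would apply $\phi$ to the first expansion written in degree $n-k$, namely $\eta=\sum_i\omega_i^{n-k}\,\pi_\omega(\overline{\omega}_i^{k}\wedge\eta)$ for $\eta\in\Omega^{n-k}(A)$, obtaining $\phi(\eta)=\sum_i\phi(\omega_i^{n-k})\,\pi_\omega(\overline{\omega}_i^{k}\wedge\eta)$ by right $A$-linearity of $\phi$. Pushing the scalars $\phi(\omega_i^{n-k})$ inside $\pi_\omega$ via the commutation identity identifies $\phi$ with $\ell^k_{\pi_\omega}(\omega')$ for $\omega'=\sum_i\nu_\omega^{-1}\!\big(\phi(\omega_i^{n-k})\big)\,\overline{\omega}_i^{k}$, which is precisely the shape dictated by the right-hand side of the second hypothesized expansion (the reassuring consistency check). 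With $k=0$ handled directly from the volume-form property, condition (4) holds, so by Proposition \ref{integrableequiva} (and the remark that a volume form is integrating iff it satisfies (4)) the form $\omega$ is an integral form. Finally, finite projectivity on both sides follows by reading the two expansions as dual bases: the first gives the right-module statement exactly as in part (1), while the second gives the left-module statement with functionals $g_i(\omega')=\nu_\omega^{-1}(\pi_\omega(\omega'\wedge\omega_i^{n-k}))$, whose left $A$-linearity is guaranteed precisely by the twist $\nu_\omega^{-1}$ and the same commutation identity.
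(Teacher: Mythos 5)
The paper does not prove this proposition; it is imported verbatim from Brzezi\'nski--Sitarz \cite[Lemmas 2.6 and 2.7]{BrzezinskiSitarz2017} as a citation, so there is no in-paper argument to compare against. Your reconstruction is correct and follows the same route as the original source: part (1) is the dual basis lemma applied to $f_i(\omega')=\pi_{\omega}(\overline{\omega}_i\wedge\omega')$, and part (2) verifies condition (4) of Proposition \ref{integrableequiva} (injectivity from the second expansion, surjectivity from the first expansion plus the commutation identity $a\,\pi_{\omega}(\xi)=\pi_{\omega}(\nu_{\omega}^{-1}(a)\xi)$), with projectivity on both sides read off from the two expansions as right and left dual bases; the only omission is the trivial remark that $\Omega^{0}(A)$ and $\Omega^{n}(A)$ are free, hence projective.
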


Brzezi\'nski and Sitarz \cite[p. 421]{BrzezinskiSitarz2017} asserted that to connect the integrability of the differential graded algebra $(\Omega (A), d)$ with the algebra $A$, it is necessary to relate the dimension of the differential calculus $\Omega (A)$ with that of $A$, and since we are dealing with algebras that are deformations of coordinate algebras of affine varieties, the {\em Gelfand-Kirillov dimension} introduced by Gelfand and Kirillov \cite{GelfandKirillov1966, GelfandKirillov1966b} seems to be the best suited. Briefly, given an affine $\Bbbk$-algebra $A$, the {\em Gelfand-Kirillov dimension of} $A$, denoted by ${\rm GKdim}(A)$, is given by
\[
{\rm GKdim}(A) := \underset{n\to \infty}{\rm lim\ sup} \frac{{\rm log}({\rm dim}\ V^{n})}{{\rm log}\ n},
\]

\noindent where $V$ is a finite-dimensional subspace of $A$ that generates $A$ as an algebra. This definition is independent of choice of $V$. If $A$ is not affine, then its Gelfand-Kirillov dimension is defined to be the supremum of the Gelfand-Kirillov dimensions of all affine subalgebras of $A$. An affine domain of Gelfand-Kirillov dimension zero is precisely a division ring that is finite-dimensional over its center. In the case of an affine domain of Gelfand-Kirillov dimension one over $\Bbbk$, this is precisely a finite module over its center, and thus polynomial identity. In some sense, this dimensions measures the deviation of the algebra $A$ from finite dimensionality. For more details about this dimension, see the treatment developed by Krause and Lenagan \cite{KrauseLenagan2000}.

After preliminaries above, we arrive to the key notion of this paper.

\begin{definition}[{\cite[Definition 2.4]{BrzezinskiSitarz2017}}]\label{BrzezinskiSitarz2017Definition2.4}
An affine algebra $A$ with integer Gelfand-Kirillov dimension $n$ is said to be {\em differentially smooth} if it admits an $n$-dimensional connected integrable differential calculus $(\Omega (A), d)$.
\end{definition}

From Definition \ref{BrzezinskiSitarz2017Definition2.4} it follows that a differentially smooth algebra comes equipped with a well-behaved differential structure and with the precise concept of integration \cite[p. 2414]{BrzezinskiLomp2018}.

\begin{example}
As we said in the Introduction, several examples of noncommutative algebras have been proved to be differentially smooth (e.g. \cite{Brzezinski2015, BrzezinskiElKaoutitLomp2010, BrzezinskiLomp2018, BrzezinskiSitarz2017, Karacuha2015, KaracuhaLomp2014, ReyesSarmiento2022}). For instance, the polynomial algebra $\Bbbk[x_1, \dotsc, x_n]$ has the Gelfand-Kirillov dimension $n$ and the usual exterior algebra is an $n$-dimensional integrable calculus, whence $\Bbbk[x_1, \dotsc, x_n]$ is differentially smooth. From \cite{BrzezinskiElKaoutitLomp2010} we know that the coordinate algebras of the quantum group $SU_q(2)$, the standard quantum Podle\'s and the quantum Manin plane are differentially smooth.
\end{example}

\begin{remark}
As expected, there are examples of algebras that are not differentially smooth. Consider the commutative algebra $A = \mathbb{C}[x, y] / \langle xy \rangle$. A proof by contradiction shows that for this algebra there are no one-dimensional connected integrable calculi over $A$, so this provides a structural obstruction to differential smoothness \cite[Example 2.5]{BrzezinskiSitarz2017}.
\end{remark}

\subsection{Artin-Schelter regular algebras of dimension \texorpdfstring{$5$}{Lg}}\label{AS5}

In the setting of noncommutative algebras appearing in noncommutative geometry, {\em Artin-Schelter regular algebras} introduced by Artin and Schelter \cite{ArtinSchelter1987} as a class of three-dimensional graded algebras may be regarded as noncommutative versions of the polynomial ring in three indeterminates.

\begin{definition}\label{ASSchelterregularalgebradefinition}
An {\em Artin-Schelter} (AS-regular for short) {\em regular algebra} is an $\mathbb{N}$-graded algebra $A = \bigoplus\limits_{n\ge 0} A_n$ over $\Bbbk$ which is {\em connected} (that is, $A_0 = \Bbbk$) and satisfies the following three conditions:
\begin{enumerate}
    \item [\rm (i)] $A$ has finite global dimension $d$, that is, ${\rm gld}(A) = d < \infty$.
    \item [\rm (ii)] $A$ has polynomial growth, i.e., ${\rm GKdim}(A) < \infty$.
    \item [\rm (iii)] ({\em Gorenstein condition}) $A$ is {\em Gorenstein} in the sense that 
    \[
    {\rm Ext}_A^{i} (_A\Bbbk,\ _AA) \cong \begin{cases}
0, & {\rm if}\ i\neq d, \\ \Bbbk(l)_A, & {\rm if}\ i = d, 
    \end{cases}
    \]

    \noindent for some shift $l\in \mathbb{Z}$. Equivalently, 
    \[
    {\rm Ext}_A^{i} (\Bbbk_A , A_A) \cong \begin{cases}
0, & {\rm if}\ i\neq d, \\ _A \Bbbk(l), & {\rm if}\ i = d, 
    \end{cases}
    \]

     \noindent for some shift $l\in \mathbb{Z}$ \cite{Rogalski2023}. 
\end{enumerate}
\end{definition}

These conditions capture both homological regularity and finiteness in growth, making AS-regular algebras behave similarly to commutative polynomial rings in many respects. Perhaps most significantly, these algebras often serve as coordinate rings of noncommutative analogues of projective spaces.

\begin{example}
\leavevmode
\begin{enumerate}
  \item The commutative polynomial ring \( \Bbbk[x_1, x_2, \dots, x_n] \), graded by total degree, is an AS-regular algebra of global dimension \( n \). It satisfies the Gorenstein condition via classical local duality and has GK dimension \( n \).

  \item The three-dimensional Sklyanin algebra,
  {\small{\[
    A = \Bbbk\{ x, y, z \} / \big\langle ayz + bzy + cx^2,\ azx + bxz + cy^2,\ axy + byx + cz^2 \big\rangle,
  \]}}
\noindent defined for suitable parameters \( a, b, c \in \Bbbk \), is a noncommutative AS-regular algebra of global dimension $3$. This algebra was introduced by Artin, Tate, and Van den Bergh as a deformation of the commutative polynomial ring and has become a cornerstone example in the study of noncommutative projective geometry \cite{ArtinTateVandenBergh1990}.
\end{enumerate}
\end{example}

We focus on the study of five-dimensional AS-regular algebras, especially those generated by two, three and five elements.

\subsubsection{AS-regular algebras with two generators}

In this case, the classification of five-dimensional AS-regular algebras generated by two elements under a $\mathbb{Z}^2$-grading is undertaken in \cite{ZhouLu2013} using the method of Hilbert-driven Gröbner basis computations.

\begin{proposition}[{\cite[Theorem A]{ZhouLu2013}}]\label{ZhouLu2013ThmA}
Let $\mathcal{X}$ denote the family of algebras labeled from $A$ to $P$ constructed in Sections $3$-$7$ \cite{ZhouLu2013}. Then $\mathcal{X}$ is, up to isomorphism and switching, a complete list of properly $\mathbb{Z}^2$-graded AS-regular algebras of global dimension five which are domains generated by two elements and have Gelfand-Kirillov dimension at least four.
\end{proposition}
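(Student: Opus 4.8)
The plan is to reconstruct the classification from the homological rigidity that conditions (i)--(iii) of Definition~\ref{ASSchelterregularalgebradefinition} impose on the minimal graded free resolution of the trivial module $\Bbbk$, and then to realize each surviving numerical possibility by an explicit $\mathbb{Z}^2$-homogeneous set of relations whose leading words are controlled by a Gr\"obner basis. First I would fix the left end of the resolution $0 \to P_5 \to P_4 \to P_3 \to P_2 \to P_1 \to P_0 \to \Bbbk \to 0$: since $A$ is generated by two elements in degree one, $A_1$ is two-dimensional and $P_1 = A(-1)^2$, while $P_0 = A$. The Gorenstein condition (iii) is equivalent to this resolution being self-dual, i.e. isomorphic to its own ${\rm Hom}_A(-,A)$-dual up to a degree shift $l$. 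This symmetry forces $P_5 = A(-l)$ and $P_4 = A(-l+1)^2$, and makes $P_3$ the shifted dual of $P_2$, so that the whole resolution is governed by the degrees of the defining relations recorded in $P_2$.

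The second step converts this into a constraint on the Hilbert series. The resolution yields $H_A(t) = q(t)^{-1}$, where $q(t)$ is the alternating sum of the graded Betti polynomials; with two generators and three relations of degrees $d_1, d_2, d_3$ one finds $q(t) = 1 - 2t + \sum_i t^{d_i} - \sum_i t^{\,l - d_i} + 2t^{\,l-1} - t^{\,l}$, and self-duality is precisely the functional equation $q(t) = -t^{\,l} q(t^{-1})$. I would then impose the three finiteness requirements at once: the coefficients of $q(t)^{-1}$ must be nonnegative, since they are dimensions; the pole of $H_A(t)$ at $t = 1$ must be finite by the polynomial growth of condition (ii); and its order, which computes ${\rm GKdim}(A)$, must be at least $4$, so that $(1-t)^4$ divides $q(t)$. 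Solving this Diophantine system for the admissible relation degrees is the \emph{Hilbert-driven} heart of the argument, and it should reduce the possibilities to a finite list of numerical types, among them $(3,5,5)$, $(3,4,7)$ and $(4,4,4)$.

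The third step is realization and rigidity. For each admissible numerical type I would enumerate the $\mathbb{Z}^2$-homogeneous relations compatible with the prescribed bidegrees of the two generators, compute a Gr\"obner basis for a fixed monomial order, and check that the resulting obstruction set of leading words reproduces the target Hilbert series; expressing these leading words through Lyndon words keeps the combinatorics manageable and, when the obstruction set is finite, furnishes a PBW-type basis from which finite global dimension and the domain property can be read off. One then exhibits the self-dual resolution explicitly, typically by an inductive normal-element or Ore-extension argument, to confirm that each surviving candidate genuinely meets the Gorenstein condition, and finally passes to the quotient by algebra isomorphisms and by switching the two generators to secure irredundancy.

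The hard part will be the two-sided completeness. On the numerical side one must be sure that the case analysis on $q(t)$ omits no admissible relation-degree pattern; on the structural side one must verify authentic AS-regularity for every surviving family, since matching the Hilbert series and being a domain does not by itself force the resolution to be self-dual, and the Gorenstein condition therefore has to be established directly in each case. Finally, the identification of isomorphisms across the $\mathbb{Z}^2$-graded families must be carried out carefully enough that no two entries of the final list (the families $A$ through $P$) coincide and none is inadvertently dropped.
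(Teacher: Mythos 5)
The paper does not actually prove this proposition: it is imported verbatim as Theorem A of \cite{ZhouLu2013}, so there is no internal proof to compare against, and what follows measures your outline against the strategy of the cited source as the paper itself describes it. On that score your architecture is right — a self-dual minimal free resolution forced by the Gorenstein condition, Hilbert-series constraints on the relation degrees, realization via Gr\"obner bases whose obstruction sets consist of Lyndon words, and a final pass modulo isomorphism and switching is exactly the ``Hilbert-driven Gr\"obner basis'' method the paper attributes to Zhou and Lu.

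There is, however, a concrete gap in your numerical step. You fix $P_2 = \bigoplus_{i=1}^{3} A(-d_i)$, i.e.\ you assume exactly three defining relations, and you write $q(t)$ accordingly. The classification does not permit this restriction: the paper explicitly records that Zhou and Lu establish the existence of algebras of resolution type $(4,4,4,5,5)$, which have five relations, and the Fl{\o}ystad--Vatne list of admissible types for two-generated dimension-five regular algebras includes types with more than three relations. Your Diophantine analysis of $q(t)$ would therefore silently discard part of the family $\mathcal{X}$ and could not yield a complete list. Beyond that, your third and fourth steps are a statement of what must be done rather than an argument: the entire content of Theorem A is the exhaustive case analysis showing that every admissible obstruction pattern is realized by one of the algebras $A$ through $P$ and by nothing else, together with the verification of AS-regularity (in particular the Gorenstein condition) family by family; none of this is carried out, and you yourself flag it as ``the hard part.'' As it stands the proposal is a reasonable reading plan for \cite{ZhouLu2013}, not a proof.
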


\begin{proposition}[{\cite[Theorem B]{ZhouLu2013}}]\label{ZhouLu2013ThmB}
Let $A$ be a properly $\mathbb{Z}^2$-graded AS-regular algebra of global dimension five generated by two elements. If $A$ is a domain and $\mathrm{GKdim}(A) \geq 4$, then $A$ is strongly noetherian, Auslander regular, and Cohen-Macaulay.
\end{proposition}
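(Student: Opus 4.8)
The statement is a structural refinement of the classification in Proposition \ref{ZhouLu2013ThmA}, so the plan is to leverage that classification together with standard filtered-to-graded lifting techniques. The overall strategy has three steps: (i) equip each algebra $A$ in the list $A$--$P$ with a filtration coming from the Gröbner-basis analysis already used to establish the classification; (ii) identify the associated graded algebra $\mathrm{gr}(A)$ with an algebra whose strong noetherianity, Auslander regularity and Cohen--Macaulay property are known in advance; and (iii) lift each of the three properties from $\mathrm{gr}(A)$ back to $A$. The domain hypothesis and the bound $\mathrm{GKdim}(A)\ge 4$ enter only to place $A$ inside the families covered by Proposition \ref{ZhouLu2013ThmA} and to guarantee that the relevant dimension function is exact.

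Concretely, I would first recall that the Hilbert-driven Gröbner basis computation underlying Proposition \ref{ZhouLu2013ThmA} furnishes, for each family, a finite reduced Gröbner basis and hence a PBW-type $\Bbbk$-basis of standard monomials. The chosen admissible term order induces an $\mathbb{N}$-filtration on $A$ whose associated graded algebra is the monomial algebra $\Bbbk\langle x,y\rangle$ modulo the ideal of leading terms of the relations. The key point, to be checked case by case, is that in each family this leading-term ideal is generated by a quantum-commuting set of monomials, so that $\mathrm{gr}(A)$ is isomorphic to a skew polynomial ring, i.e.\ a quantum affine space, or at worst to an iterated Ore extension of $\Bbbk$ built from diagonal automorphisms without derivations. (Where available, a direct realization of $A$ itself as an iterated or double Ore extension of a lower-dimensional AS-regular algebra would shortcut the argument and is worth attempting in parallel.)

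Once $\mathrm{gr}(A)$ is recognized as such a skew polynomial ring, its three properties are classical: an iterated Ore extension of $\Bbbk$ by automorphisms is noetherian by the Ore-extension Hilbert basis theorem, strongly noetherian because its Ore-extension structure survives tensoring with any commutative noetherian $\Bbbk$-algebra, Auslander regular because each automorphic extension step preserves the Auslander condition and raises global dimension by one, and Cohen--Macaulay because $\mathrm{GKdim}$ is additive along the extension. With $\mathrm{gr}(A)$ noetherian in hand, I would then invoke the filtered-to-graded lifting results: the Artin--Small--Zhang criterion to transfer strong noetherianity from $\mathrm{gr}(A)$ to $A$, and the lifting theorems of Levasseur and Bj\"ork to transfer Auslander regularity and the Cohen--Macaulay property with respect to the $\mathrm{GKdim}$/grade dimension function, which behaves well because $\mathrm{GKdim}(A)=\mathrm{GKdim}(\mathrm{gr}(A))$.

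The principal obstacle is step (ii): verifying uniformly across all of the families $A$--$P$ that the leading-term monomial algebra is genuinely a quantum polynomial ring rather than a more pathological monomial algebra. Because the defining relations live in mixed degrees --- the types $(3,5,5)$, $(3,4,7)$ and $(4,4,4)$ --- completing the Gröbner basis can generate extra overlap (S-polynomial) relations, and one must confirm that the surviving standard monomials still assemble into a commuting-variable pattern and that no family degenerates into an associated graded with zero divisors or torsion that would destroy the Cohen--Macaulay or Auslander properties. This bookkeeping, rather than the final lifting step, is where the genuine difficulty lies.
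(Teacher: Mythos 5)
The paper does not prove this proposition at all: it is imported verbatim from Zhou--Lu, cited as \cite[Theorem B]{ZhouLu2013}, and the author offers no argument beyond the surrounding prose (which mentions that the classified relations form minimal Gr\"obner bases and that the classification records ``normal sequences''). So there is no in-paper proof to compare yours against, and the honest assessment is of your sketch on its own terms.

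Your overall strategy --- use the classification of Proposition \ref{ZhouLu2013ThmA}, pass to a PBW-type basis coming from the Gr\"obner machinery, and lift strong noetherianity, Auslander regularity and the Cohen--Macaulay property from a well-understood auxiliary algebra --- is the standard and essentially correct template for results of this kind, and you correctly identify step (ii) as where the real work lies. But there is one concrete slip in how you set up step (ii): the associated graded algebra of a two-generator algebra with respect to the term order cannot be ``a skew polynomial ring, i.e.\ a quantum affine space'' \emph{on the generators $x,y$}, since that would force $\mathrm{GKdim}=2$, contradicting $\mathrm{GKdim}(A)\ge 4$. What actually happens in the Lyndon-word framework the paper alludes to is that the PBW generators of the monomial algebra $\Bbbk\langle x,y\rangle/(\text{leading words})$ are the \emph{standard Lyndon words} outside the obstruction set --- five of them in the relevant cases --- and it is the algebra on those five monomials that one hopes resembles an iterated Ore extension. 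Your sketch elides this, and it matters: the growth, the regular-sequence structure, and the exactness of $\mathrm{GKdim}$ all hinge on counting and ordering those Lyndon words, not the two degree-one generators. Moreover, the paper's own summary of \cite{ZhouLu2013} suggests the published route goes through normalizing sequences in each family (lifting the three properties along quotients by regular normal elements, in the style of Levasseur and Zhang), which you mention only parenthetically as a possible ``shortcut''; in practice that is likely the main road rather than the detour. Neither issue is fatal to your plan, but as written the reduction to a quantum affine space on two variables would fail, and the case-by-case verification you defer is the entire content of the theorem.
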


\subsubsection{AS-regular algebras with four generators}

A distinct and constructive approach to generating AS-regular algebras of global dimension five was developed by Li and Wang in \cite{LiWang2016}, where they consider deformations of the universal enveloping algebra of a finite-dimensional graded Lie algebra. 

Let \( A = \Bbbk \{ x, y, z, w \} / \langle r_1, r_2, r_3, r_4, r_5\rangle \), where:
\[
\begin{aligned}
r_1 &= xy + \alpha yx + \beta zw + \gamma wz, \\
r_2 &= xz + a zx, \quad
r_3 = xw + b wx, \\
r_4 &= yz + c zy, \quad
r_5 = yw + d wy,
\end{aligned}
\]
\noindent with parameters \( \alpha, \beta, \gamma, a, b, c, d \in \Bbbk \). The authors prove that when all parameters are nonzero and satisfy \( abcd = 1 \), the algebra \( A \) is AS-regular of global dimension $5$.

\begin{proposition} [{\cite[Theorem 2.9]{LiWang2016}}]\label{LiWang2016Thm2.9}
Let \( A \) be as defined above. If \( \alpha\beta\gamma \neq 0 \) and \( abcd = 1 \), then \( A \) is an AS-regular algebra of global dimension $5$, which is also strongly noetherian, Auslander regular, and Cohen-Macaulay. Moreover, its Hilbert series is
\[
H_A(t) = \frac{1}{(1 - t)^4(1 - t^2)}.
\]
\end{proposition}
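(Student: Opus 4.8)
The plan is to realize $A$ as an iterated graded Ore extension erected over a three-dimensional cubic Artin–Schelter regular algebra in the variables $z,w$; this simultaneously yields every asserted property and isolates the exact role of the numerical hypothesis $abcd=1$. The first move is to extract the genuinely cubic information concealed in the quadratic presentation. Reducing the order-three overlap words $xyz$ and $xyw$ in the two available ways by means of $r_1,\dots,r_5$ and subtracting the outcomes, one is led to two elements
\[
g_1=ac\beta\,z^2w+(ac\gamma-\beta)\,zwz-\gamma\,wz^2,\qquad g_2=-\beta\,zw^2+(bd\beta-\gamma)\,wzw+bd\gamma\,w^2z,
\]
both lying in $\Bbbk\langle z,w\rangle$ and, by their very construction, belonging to the two-sided ideal $(r_1,\dots,r_5)$.

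Next I would study the base algebra $B:=\Bbbk\langle z,w\rangle/(g_1,g_2)$. With the degree-lexicographic order $z>w$ the leading words are $z^2w$ and $zw^2$ (their leading coefficients $ac\beta$ and $-\beta$ being nonzero thanks to $\beta\neq 0$ and $a,c\neq 0$), and their unique overlap is the degree-four word $z^2w^2$. Computing the two reductions of $z^2w^2$ and matching coefficients, the overlap is resolvable exactly when $1/(ac)=bd$, that is, \emph{precisely when $abcd=1$} (here $\gamma\neq 0$ from $\alpha\beta\gamma\neq 0$ ensures the relevant syzygy is nontrivial). Under this hypothesis $\{g_1,g_2\}$ is a reduced Gröbner basis, the normal monomials give $H_B(t)=\frac{1}{(1-t)^2(1-t^2)}$, and the associated Anick resolution collapses to the self-dual complex $0\to B(-4)\to B(-3)^2\to B(-1)^2\to B\to \Bbbk\to 0$. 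Hence $B$ is a cubic Artin–Schelter regular algebra of global dimension three, and in particular a Noetherian domain.

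I would then erect the Ore tower. One checks that $\sigma\colon z\mapsto -cz,\ w\mapsto -dw$ and $\tau\colon y\mapsto -\alpha y,\ z\mapsto -az,\ w\mapsto -bw$ are graded automorphisms (each rescales $g_1$ and $g_2$ by a scalar, and each is invertible since $\alpha,a,b,c,d\neq 0$), and that the map $\delta$ determined by $\delta(z)=\delta(w)=0$ and $\delta(y)=-\beta zw-\gamma wz$ is a $\tau$-derivation; the essential consistency identities $\delta(yz+czy)=g_1$ and $\delta(yw+dwy)=g_2$ hold because $g_1$ and $g_2$ vanish in $B$. Forming $C:=B[y;\sigma]$ and then $A':=C[x;\tau,\delta]$ produces two successive graded Ore extensions, each of which preserves the Noetherian domain property and Artin–Schelter regularity, raises the global dimension by one, and multiplies the Hilbert series by $(1-t)^{-1}$; thus $A'$ is AS-regular of global dimension five with $H_{A'}(t)=\frac{1}{(1-t)^4(1-t^2)}$. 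Its defining relations are exactly $\{g_1,g_2,r_1,\dots,r_5\}$, and since $g_1,g_2\in(r_1,\dots,r_5)$ the two ideals coincide, giving $A\cong A'$. The strong Noetherian, Auslander regular, and Cohen–Macaulay properties then follow from the standard behaviour of these invariants under graded Ore extensions by automorphisms, while $\mathrm{GKdim}(A)=5$ is read from the order of the pole of $H_A$ at $t=1$.

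The decisive step—and the one I expect to be the main obstacle—is the analysis of $B$: verifying that the single overlap $z^2w^2$ genuinely closes up, which is exactly where $abcd=1$ enters and is seen to be necessary, and confirming that the resulting Gröbner and Anick data assemble into the symmetric length-three resolution that witnesses the Gorenstein condition. Once $B$ is established as a cubic AS-regular algebra, the Ore-extension formalism renders the ascent to global dimension five and the transfer of all remaining properties essentially automatic; the residual care is in quoting the preservation of the strongly Noetherian, Auslander regular, and Cohen–Macaulay properties at each stage of the tower.
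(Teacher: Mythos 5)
Your strategy is sound and it is genuinely different from the route the paper attributes to Li and Wang. There the key object is the normal homogeneous element $s=zw+\tfrac{\gamma}{\beta}wz$: one passes \emph{down} to the global-dimension-four quotient $A/(s)$ (a quantum binomial PBW algebra in Gateva--Ivanova's sense) and lifts regularity and the auxiliary properties back through the regular normal element, with Gr\"obner bases and the Anick resolution controlling the homology. You instead go \emph{up}: you excavate the hidden cubic relations $g_1,g_2$ in $z,w$, identify a cubic dimension-three base $B$, and climb by two graded Ore extensions. Your computations check out: the two reductions of $xyz$ and $xyw$ produce exactly your $g_1,g_2$; the single overlap $z^2w^2$ of the leading words $z^2w$ and $zw^2$ resolves precisely when $abcd=1$ (the coefficient of $zwzw$ forces $bd=1/(ac)$, after which the coefficients of $w^2z^2$ and $wzwz$ match automatically); and $\sigma,\tau,\delta$ are consistent because $\delta(yz+czy)=g_1$ and $\delta(yw+dwy)=g_2$ vanish in $B$. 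What your approach buys is a structural explanation of the hypothesis $abcd=1$ and a mechanical transfer of the strongly noetherian, Auslander regular and Cohen--Macaulay properties along the Ore tower; what the quotient-by-$s$ route buys is a shorter induction that never has to certify a new dimension-three regular algebra from scratch.

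The one step you should not leave to optimism is the Gorenstein condition for $B$. Exactness of the Anick complex $0\to B(-4)\to B(-3)^2\to B(-1)^2\to B\to\Bbbk\to 0$ yields the global dimension and the Hilbert series, but the numerically ``self-dual'' shape of the resolution does not by itself compute $\mathrm{Ext}^i_B(\Bbbk,B)$; one must still check that the transposed complex is exact. The cleanest repair reunites you with the paper's device one level down: writing $\tilde s=\beta zw+\gamma wz$, one has $g_1=ac\,z\tilde s-\tilde s z$ and $g_2=bd\,w\tilde s-\tilde s w$, so $\tilde s$ is normal in $B$, it is regular because $H_B(t)-H_{B/(\tilde s)}(t)=t^2H_B(t)$ forces $b\mapsto b\tilde s$ to be injective, and $B/(\tilde s)=\Bbbk\{z,w\}/\langle \beta zw+\gamma wz\rangle$ is the quantum plane; the standard lemma on regular normal extensions then gives AS-regularity of $B$. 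With that inserted, the remaining assertions follow from the quoted preservation results for graded Ore extensions, and the argument is complete.
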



\subsubsection{AS-regular algebras with five generators}

The construction relies on a family of symmetric \(5 \times 5\) matrices \(M_1, \ldots, M_5\) over an algebraically closed field $\Bbbk$, with \(\mathrm{char}(\Bbbk) \neq 2\), defining a system of quadrics:
\[
\begin{aligned}
q_1 &= z_1^2 + z_2z_4, \quad
q_2 = z_2^2 + z_1z_3, \quad
q_3 = z_3^2 + z_1z_4, \\
q_4 &= z_4^2 + z_1z_5, \quad
q_5 = z_5^2 + z_1z_2.
\end{aligned}
\]
The associated graded Clifford algebra \(C\) is then defined by generators \(x_1, \ldots, x_5\) of degree one and quadratic relations derived from the \(q_i\). The defining relations of \(C\) include:
\begin{align}
x_1x_2 + x_2x_1 &= x_5^2, & x_2x_5 + x_5x_2 &= 0, \notag \\
x_1x_3 + x_3x_1 &= x_2^2, & x_3x_4 + x_4x_3 &= 0, \notag \\
x_1x_4 + x_4x_1 &= x_3^2, & x_3x_5 + x_5x_3 &= 0, \label{relC}\\
x_1x_5 + x_5x_1 &= x_4^2, & x_4x_5 + x_5x_4 &= 0,\notag \\
x_2x_4 + x_4x_2 &= x_1^2, & x_2x_3 + x_3x_2 &= 0. \notag
\end{align}
\begin{proposition}[{\cite[p.4]{Vancliff2024}}]\label{AS5noPoints}
Let \(C\) be the graded Clifford algebra constructed as above. Then \(C\) is a quadratic AS-regular algebra of global dimension five, which is Auslander-regular, Cohen-Macaulay, and has Hilbert series \(H_C(t) = \frac{1}{(1 - t)^{5}}\). Moreover, \(C\) admits no point modules.
\end{proposition}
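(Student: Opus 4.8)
The plan is to separate the statement into two logically independent halves: (A) the claim that $C$ is quadratic AS-regular of global dimension five, Auslander-regular, Cohen–Macaulay, with Hilbert series $1/(1-t)^5$; and (B) the claim that $C$ admits no point modules. I would derive (A) entirely from the general structure theory of graded Clifford algebras, which compresses all of these properties into a single geometric condition on the defining quadrics, whereas (B) needs genuinely new input, since base-point-freeness alone never precludes point modules.

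For part (A) I would invoke the standard theorem (Le Bruyn; Aubry–Lemaire; refined by Cassidy–Vancliff) that a graded Clifford algebra attached to symmetric matrices $M_1,\dots,M_n$ over a field of characteristic $\neq 2$ is quadratic AS-regular of global dimension $n$ — equivalently a quantum $\mathbb{P}^{n-1}$, hence Noetherian, Auslander-regular, Cohen–Macaulay, with Hilbert series $1/(1-t)^n$ — exactly when the associated quadric system $\{q_1,\dots,q_n\}$ is base-point free in $\mathbb{P}^{n-1}$. It then remains to check that $q_1,\dots,q_5$ have no common zero in $\mathbb{P}^4$. If $z_1=0$, the equations $q_2=q_3=q_4=q_5=0$ immediately force $z_2=z_3=z_4=z_5=0$; so assume $z_1\neq 0$ and normalize $z_1=1$. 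Then $q_2=q_3=q_4=0$ give $z_3=-z_2^2$, $z_4=-z_2^4$, $z_5=-z_2^8$, while $q_5=0$ yields $z_2(1+z_2^{15})=0$ and $q_1=0$ yields $z_2z_4=-z_2^5=-1$, i.e.\ $z_2^5=1$. But $z_2^5=1$ forces $z_2^{15}=1$, contradicting $z_2^{15}=-1$. Hence there is no common zero, the system is base-point free, and (A) follows.

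For part (B) I would use that any point module over the quadratic algebra $C=T(V)/\langle R\rangle$ restricts to a truncated point module of length two, and so produces a point of the point scheme
\[
\Gamma=\{(p,q)\in\mathbb{P}(V^{*})\times\mathbb{P}(V^{*}): r(p,q)=0\ \text{for all}\ r\in R\},
\]
where each relation $r\in V\otimes V$ is read as a bilinear form; thus it suffices to prove $\Gamma=\emptyset$. Reading the ten relations (\ref{relC}) as bilinear forms, the five with vanishing anticommutator give $p_iq_j+p_jq_i=0$ for the pairs $(2,3),(2,5),(3,4),(3,5),(4,5)$, the relation $\{x_2,x_4\}=x_1^2$ gives $p_2q_4+p_4q_2=p_1q_1$, and the four remaining ones give the cyclic relations $p_2q_2=p_3+q_3$, $p_3q_3=p_4+q_4$, $p_4q_4=p_5+q_5$, $p_5q_5=p_2+q_2$. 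The decisive device is to package the first six into the symmetric matrix $S=uv^{T}+vu^{T}$, where $u=(p_2,p_3,p_4,p_5)$ and $v=(q_2,q_3,q_4,q_5)$: its only possibly nonzero off-diagonal entry is the $(2,4)$-slot, equal to $p_1q_1$, while $S$ has rank at most two because it is a sum of two rank-one symmetric matrices. I would first dispose of the locus $p_1q_1=0$, where the vanishing of every off-diagonal entry of $S$ forces $u=0$ or $v=0$ and hence $p=0$ or $q=0$; then, after normalizing $p_1=q_1=1$, I would split on whether the $2\times2$ block of $S$ on indices $\{2,4\}$ has rank one or two, in each branch combining the rank-$\le 2$ constraint with the cyclic relations to reach a contradiction.

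The main obstacle is part (B): unlike the base-point-free check it is not a single computation but an organized case analysis of the bilinear system, and its real content is that these specific matrices $M_i$ are engineered so that the point scheme is empty rather than merely finite — a generic base-point-free graded Clifford algebra does carry point modules. I expect the rank-at-most-two structure of $S=uv^{T}+vu^{T}$ to be the mechanism that keeps the casework short and finite; the delicate step is checking that no off-diagonal cancellation can survive once the four cyclic relations are imposed, which is precisely where the asymmetry introduced by the single nonvanishing anticommutator $\{x_2,x_4\}=x_1^2$ does the work.
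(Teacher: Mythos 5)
First, a point of comparison: the paper does not actually prove this proposition --- it is imported verbatim from \cite[p.~4]{Vancliff2024} and used as a black box, so there is no internal proof to measure your argument against. Judged on its own terms, your part (A) is sound: the reduction to base-point-freeness via the standard theorem on graded Clifford algebras is the right move, and the elimination $z_3=-z_2^2$, $z_4=-z_2^4$, $z_5=-z_2^8$, followed by the clash between $z_2^5=1$ and $z_2^{15}=-1$, is correct (it uses $\mathrm{char}\,\Bbbk\neq 2$, which is part of the hypothesis). The reduction of (B) to emptiness of the length-three truncated point scheme $\Gamma$ is legitimate as a sufficient condition, and $\Gamma$ is indeed empty for this algebra, so your strategy can be completed; note, however, that Vancliff's own route is different and shorter --- for graded Clifford algebras point modules are governed by the quadrics of rank at most $2$ in the linear system spanned by $q_1,\dots,q_5$, and she shows that system contains none.

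The genuine gap is in your treatment of the locus $p_1q_1=0$. It is false that the vanishing of every off-diagonal entry of $S=uv^{T}+vu^{T}$ forces $u=0$ or $v=0$: take $u=v=(1,0,0,0)$, or more generally $u=(a,b,0,0)$ and $v=c(a,-b,0,0)$; all off-diagonal entries of $S$ vanish while both vectors are nonzero. (Moreover, even granting $v=0$, if $q_1\neq 0$ this gives the perfectly good projective point $q=e_1$, not $q=0$.) What the off-diagonal vanishing really yields is that $u$ and $v$ are supported on a common set of at most two indices, and you must then bring in the cyclic relations to finish: with $p_1=0$ and $q_1=1$ the four relations of the form $x_1x_j+x_jx_1=x_k^2$ read $p_3=p_2q_2$, $p_4=p_3q_3$, $p_5=p_4q_4$, $p_2=p_5q_5$, so a single nonzero $p_k$ ($k\geq 2$) propagates around the cycle and forces all of $p_2,\dots,p_5$ to be nonzero, contradicting the support bound; if all vanish then $p=0$. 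A symmetric argument handles $q_1=0$, and $p_1=q_1=0$ forces $S=0$, whence $v=0$. Your Case B outline (comparing the ranks of the $\{2,4\}$ and $\{3,5\}$ blocks of $S$) does close as you predict, but as written the $p_1q_1=0$ branch does not; it needs the cyclic relations, not the rank argument alone.
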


\begin{remark}
This construction answers an open question raised during the preparation of D. Rogalski’s survey on AS-regular algebras \cite{Rogalski2023}, where the existence of such an example was anticipated but not explicitly known in the literature. The methods of Vancliff build on earlier work with Van den Bergh and Le Bruyn, and demonstrate that quadratic regularity does not guarantee the presence of point modules when the dimension is sufficiently high.
\end{remark}

\section{Differential and integral calculus}\label{DifferentialandintegralcalculusAS5}

This section contains the important results of the paper.

\begin{definition}\label{genwis}
Let $A$ be an algebra with $n$ generators $x_1, \ldots, x_n$. Consider a family of automorphisms of $A$, $\{\nu_{x_1},\ldots, \nu_{x_n}\}$ of the form
$$
\nu_{x_j}(x_i)=a_{ij}x_i+b_{ij} \quad \text{for all } 1 \leq i, j \leq n,,
$$
where $a_{ij} \in \Bbbk$ and $b_{ij} \in \Bbbk$.

We say that the family of automorphisms $\{\nu_{x_1},\ldots, \nu_{x_n}\}$ is {\em generator-wise non-reflective} if $a_{ii}=1$, for all $1\leq i \leq n$.
 
If a family of automorphisms does not comply with this, it is said to be {\em generator-wise reflective}.
\end{definition}

\begin{lemma}\label{dx0}
Let $A$ be an algebra with $n$ generators $x_1, \ldots, x_n$, and let $\{\nu_{x_1},\ldots,\nu_{x_n}\}$ be a generator-wise reflective family of automorphisms of $A$ with the form $\nu_{x_i}(x_j)=a_{ij}x_j+b_{ij}$, $1 \leq i,j \leq n$. Consider ($\Omega^1(A), d)$ the first-order differential calculus on $A$. Then there exists $k \in \{1,\ldots,n\}$ such that
\[
dx_k \wedge dx_k = 0.
\]
\end{lemma}

\begin{proof}
Since the family of automorphisms is generator-wise reflective, there exists 
$k \in \{1,\ldots,n\}$ such that
\[
\nu_{x_k}(x_k) \;=\; a_{kk} x_k + b_{kk},
\qquad a_{kk} \notin \{0,1\},\ b_{kk} \in \Bbbk.
\]
Consider now the element
\[
x_k dx_k \;=\; (dx_k)\,\nu_{x_k}(x_k)
\;=\; dx_k(a_{kk}x_k + b_{kk}).
\]
Applying $d$ to both sides of this identity, we obtain
\begin{align*}
d(x_k dx_k) 
  &= d\big(dx_k(a_{kk}x_k + b_{kk})\big) \\
dx_k \wedge dx_k + x_k d^2(x_k) 
  &= a_{kk} d^2(x_k) x_k + a_{kk}\, dx_k \wedge dx_k + d(b_{kk}) \\
dx_k \wedge dx_k 
  &= a_{kk}\, dx_k \wedge dx_k \\
(1 - a_{kk})\, dx_k \wedge dx_k 
  &= 0.
\end{align*}
Since $1 - a_{kk}$ is invertible in $\Bbbk$, it follows that $dx_k \wedge dx_k = 0$.

\end{proof}

\begin{theorem}\label{NoDS}
Let $A$ be a $\Bbbk$-algebra generated by elements $x_1,\ldots,x_n$ and assume that every family of automorphisms $\{\nu_{x_1},\ldots,\nu_{x_n}\}$ associated with a first-order differential calculus on $A$ is generator-wise reflective. If $\mathrm{GKdim}(A)=m>n$, then $A$ is not differentially smooth.
\end{theorem}

\begin{proof}
In order to examine the differentiable smoothness of \( A \), it is necessary that a differential calculus of dimension \( m \), denoted \( \Omega^m(A) \), can be constructed.

According to Definition~\ref{defDGA}~(ii), the module \( \Omega^m(A) \) is given by
\[
\Omega^m(A) = A dA \wedge dA \wedge \cdots \wedge dA \quad \text{($m$-times)},
\]
that is, it is the \( m \)-fold product of the module of first-order differential forms. Hence, once the space \( dA \) is known, \( \Omega^m(A) \) is, in principle, determined.

However, since \( x_1, \ldots, x_n \) generate \( A \), it follows that the differentials \( dx_1, \ldots, dx_n \) generate \( dA \) as an \( A \)-bimodule. Consequently, $dA \wedge \cdots \wedge dA \quad \text{($m$ factors)}$ is generated by products of the elements \( dx_1, \ldots, dx_n \). But given that \( m > n \), it is inevitable that at least \( m - n \) of these \( dx_i \) will be repeated in any such product.

The relations among the generators $dx_1,\dots,dx_n$ of $\Omega^1(A)$ are of the following form:
\[
dx_i\wedge dx_j \;=\; a_{ij}\,dx_j\wedge dx_i,
\qquad \text{for all } 1\le i,j\le n.
\]

Since the family of automorphisms is generator-wise reflective, Lemma \ref{dx0} ensures the existence of a differential $dx_k$ such that 
\[
dx_k \wedge dx_k = 0
\]
for some $k \in \{1,\ldots,n\}$. Therefore, any product of \( m \) factors taken from  the set \( \{dx_1, \ldots, dx_n\} \), with \( m > n \), must vanish due to the repetition of at least one \( dx_k \). It follows that
\[
\Omega^m(A) = 0 \quad \text{whenever } m > n.
\]

Therefore, if \(\mathrm{GKdim}(A) = m > n\), then \(\Omega^m(A) = 0\), and thus no connected differential calculus of dimension \(m\) exists over \(A\). Consequently, \(A\) is not differentially smooth.

\end{proof}

\begin{remark}\label{rem:partial-result}
The obstruction provided by Theorem~\ref{NoDS} should be regarded as a partial result. Indeed, the statement is proved under the additional restriction that the differential calculus is compatible with a prescribed (and rather specific) family of automorphisms of $A$.

In general, the structure of $\mathrm{Aut}(A)$ for Artin--Schelter regular algebras of higher dimension can be highly nontrivial, and the systematic search and classification of automorphisms remains an active and, in many cases, open direction of research. Consequently, without a priori information on the automorphism group (or on which automorphisms may arise from an integrable calculus), it is currently not possible to upgrade the above obstruction to a fully general statement.
\end{remark}

We now consider families of five-dimensional AS-regular algebras where \( m = 5 \) and \( n < 5 \). For example, in \cite{ZhouLu2013}, Zhou and Lu classify all properly \( \mathbb{Z}^2 \)-graded AS-regular domains of global dimension 5 that are generated by two elements. Each algebra \( A \in \mathcal{X} \) in their classification satisfies:
\[
\mathrm{gldim}(A) = \mathrm{GKdim}(A) = 5, \quad \text{and} \quad \mathrm{Gen}(A) = 2.
\]
Similarly, in \cite{WangWu2012}, Wang and Wu construct nine families of AS-regular algebras of dimension five, each generated by two degree-one generators and defined by three relations of degree four. Again, each such algebra \( B \) satisfies:
\[
\mathrm{gldim}(B) = \mathrm{GKdim}(B) = 5, \quad \mathrm{Gen}(B) = 2.
\]

In both cases, Theorem~\ref{NoDS} applies once we ensure that every family of automorphisms 
$\{\nu_{x}, \nu_{y}\}$ constructed for the first-order differential calculus is generator-wise reflective. 
Since $m = 5$ and $n = 2 < 5$, any attempt to construct a $5$-dimensional differential calculus on these 
algebras necessarily yields $\Omega^{5}(A) = 0$. Consequently, this provides a structural obstruction to 
their differential smoothness.

Another example arises from the work of Li and Wang \cite{LiWang2016}, where an AS-regular algebra \( C \) is constructed from deformations of enveloping algebras of graded Lie algebras. The algebra \( C \) is defined by five quadratic relations among four generators:
\[
\mathrm{Gen}(C) = 4, \quad \mathrm{GKdim}(C) = 5.
\]
By Theorem~\ref{NoDS}, once we ensure that every family of automorphisms 
$\{\nu_{x}, \nu_{y}, \nu_{z}, \nu_{w}\}$ constructed for the first-order differential calculus 
is generator-wise reflective, we again conclude that $\Omega^{5}(C) = 0$.

Although these AS-regular algebras satisfy strong homological conditions (such as being noetherian, Auslander regular, and Cohen-Macaulay) they inherently fail to admit a nontrivial differential calculus of dimension five due to a mismatch between the number of generators and their Gelfand-Kirillov dimension. This phenomenon highlights a fundamental obstruction to differential smoothness in high-dimensional settings.

The quadratic Clifford algebra constructed by Vancliff in \cite{Vancliff2024} is an AS‑regular algebra of global dimension $5$ with
\[
H_C(t) = \frac{1}{(1 - t)^5},
\]
implying that \(\mathrm{GKdim}(C) = 5\). Hence the number of generators is exactly equal to the GK‑dimension and Theorem \ref{NoDS} does not apply here. However, this algebra is differentially smooth, as will be shown in the following result.

\begin{theorem}\label{NoClifford}
The associated graded Clifford algebra \(C\) with relations \ref{relC} is differentially smooth.
\end{theorem}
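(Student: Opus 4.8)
The plan is to exhibit explicitly a five-dimensional connected integrable differential calculus over $C$ and then invoke Definition~\ref{BrzezinskiSitarz2017Definition2.4}; since $\mathrm{GKdim}(C)=5$ by Proposition~\ref{AS5noPoints}, a calculus of dimension exactly five is what is required. First I would build the first-order calculus $\Omega^1(C)$ as a free right $C$-module on the symbols $dx_1,\dots,dx_5$, and determine its left $C$-module structure by prescribing degree-preserving commutation rules $x_i\,dx_j=\sum_{k,\ell}c^{k\ell}_{ij}\,dx_k\,x_\ell$ with scalars $c^{k\ell}_{ij}\in\Bbbk$. These scalars are not free: applying the Leibniz rule to each defining relation in \ref{relC} forces a system of consistency conditions in $\Omega^1(C)$, and the constants must be chosen so that $d$ sends each of the ten relations to zero. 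This pins down a candidate bimodule, and I would then check that with this structure $\Omega^1(C)$ is free of rank five as a left module as well, so that $dx_1,\dots,dx_5$ remain a basis on both sides.

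Next I would assemble the full differential graded algebra $\Omega(C)=\bigoplus_k\Omega^k(C)$ as the quotient of the tensor algebra of $\Omega^1(C)$ by the relations making the $dx_i$ into exterior (anticommuting) variables twisted by the same constants, extend $d$ by the graded Leibniz rule, and verify $d\circ d=0$ together with the density condition of Definition~\ref{defDGA}~(ii). Connectedness, $\ker(d|_{\Omega^0})=\Bbbk$, follows because $C$ is connected $\mathbb{N}$-graded and generated in degree one, so that $da=0$ forces $a\in\Bbbk$. The exterior structure gives $\Omega^m(C)=0$ for $m>5$ exactly as in the proof of Theorem~\ref{NoDS} (any wedge of six of the five symbols repeats one and hence vanishes through $d\circ d=0$), while $\Omega^5(C)$ is free of rank one, generated by $\omega:=dx_1\wedge dx_2\wedge dx_3\wedge dx_4\wedge dx_5$ on both sides. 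Thus the calculus has dimension five and $\omega$ is a volume form in the sense of Definition~\ref{defDGA}~(iii), with associated automorphism $\nu_\omega$ read off from $a\omega=\omega\nu_\omega(a)$ as in (\ref{BrzezinskiSitarz2017(2.2)}).

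Then I would establish integrability through criterion~(4) of Proposition~\ref{integrableequiva}: with $\pi_\omega$ the right-module projection of (\ref{BrzezinskiSitarz2017(2.1)}), I must show that each left multiplication map $\ell^k_{\pi_\omega}:\Omega^k(C)\to\mathcal{I}_{5-k}(C)$ is bijective for $k=0,\dots,4$. In practice this reduces to the hands-on verification in Proposition~\ref{BrzezinskiSitarz2017Lemmas2.6and2.7}~(2): for each $k$ I would produce finite families $\omega^k_i,\overline{\omega}^k_i\in\Omega^k(C)$, namely the complementary-degree monomials $dx_{i_1}\wedge\cdots\wedge dx_{i_k}$ together with their Hodge-type complements, so that every $k$-form $\omega'$ satisfies the two pairing identities recorded there. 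Because $\omega$ is a free generator on both sides and the wedge of a $k$-form with its complementary $(5-k)$-form returns a $C$-multiple of $\omega$, these identities amount to checking that the relations \ref{relC} are respected by the pairing, which is the same compatibility already imposed in the first step.

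The main obstacle is precisely that first step. Unlike the skew-polynomial, Ore-extension, and diagonal quantum-space cases treated in \cite{RubianoReyes2024DSSPBWKt, RubianoReyes2024DSDoubleOreExtensions, RubianoReyes2024DSBiquadraticAlgebras}, the Clifford relations are genuinely non-diagonal: five of them carry a nonzero quadratic right-hand side ($x_1x_2+x_2x_1=x_5^2$, and the analogous relations culminating in $x_2x_4+x_4x_2=x_1^2$), so the naive ansatz of a purely diagonal twisting $x_i\,dx_j=q_{ij}\,dx_j\,x_i$ of the differentials will not close under $d$. The delicate point is to choose the commutation constants $c^{k\ell}_{ij}$ so that differentiating the quadratic right-hand sides produces exactly the mixed terms generated by differentiating the left-hand sides, all while keeping $\Omega^1(C)$ free of rank five on both sides and preserving the Hilbert series expected of a five-dimensional exterior calculus. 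Once this bimodule is correctly identified, the remaining verifications of $d\circ d=0$, of the dimension count, and of the bijectivity of the maps $\ell^k_{\pi_\omega}$ are essentially bookkeeping parallel to the lower-dimensional cases.
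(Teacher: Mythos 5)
Your overall architecture matches the paper's proof: a free rank-five right $C$-module $\Omega^1(C)$ on $dx_1,\dots,dx_5$, a left action given by commutation rules compatible with the relations, extension to an exterior-type calculus with volume form $\omega=dx_1\wedge\cdots\wedge dx_5$, and integrability via Proposition~\ref{BrzezinskiSitarz2017Lemmas2.6and2.7}~(2). However, you stop short of the one step that actually constitutes the proof --- producing the commutation constants --- and, worse, you assert that the diagonal ansatz $x_i\,dx_j=q_{ij}\,dx_j\,x_i$ ``will not close under $d$'' because of the quadratic right-hand sides. This is exactly backwards: the paper's proof uses the uniform diagonal twist $q_{ij}=-1$ for all $i,j$, i.e.\ $x_i\,dx_j=-dx_j\,x_i$, induced by the automorphisms $\nu_{x_i}(x_j)=-x_j$. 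Under this twist every expression of the form $d(x_ix_j+x_jx_i)$ and every $d(x_k^2)$ vanishes identically, since
\begin{align*}
d(x_ix_j+x_jx_i) &= dx_i\,x_j - dx_j\,x_i + dx_j\,x_i - dx_i\,x_j = 0, \\
d(x_k^2) &= dx_k\,x_k - dx_k\,x_k = 0,
\end{align*}
so each of the ten relations in (\ref{relC}) differentiates to $0=0$ and the compatibility you describe as ``delicate'' is automatic. Indeed, imposing the Leibniz rule on the relations forces $q_{ij}=-1$ for every pair, so the diagonal solution is not only available but essentially unique; and since each relation is homogeneous of degree two, the sign maps $x_j\mapsto -x_j$ are genuine automorphisms of $C$, so the left action is well defined.

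The gap, then, is twofold: (i) your proposal never exhibits the bimodule structure, deferring it as an unresolved ``main obstacle,'' so no calculus is actually constructed; and (ii) the heuristic you offer for resolving it would steer the search away from the correct (and simplest) answer. The remaining ingredients you list --- connectedness via the partial derivatives $\partial_{x_i}$ on PBW monomials, the vanishing of $\Omega^m(C)$ for $m>5$, the rank-one top module, and the pairing identities of Proposition~\ref{BrzezinskiSitarz2017Lemmas2.6and2.7}~(2) with complementary wedge monomials --- do proceed as you sketch once the $-1$ twist is in place, and this is precisely how the paper completes the argument.
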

\begin{proof}
Consider the following automorphisms:
\begin{align}
   \nu_{x_i}(x_j) = &\ -x_j, \text{ for all } 1\leq i,j \leq 5. \label{Autos}
\end{align}
The maps $\nu_{x_i}$, $1 \leq i \leq 5$ can be extended to an algebra homomorphism of $C$ if and only if the definitions of homomorphism over the generators $x_j$, $1 \leq j \leq 5$ respect relations {\rm (}\ref{relC}{\rm )}. It is readily seen that the condition is satisfied unconditionally. Furthermore, these automorphisms mutually commute, as each acts on every indeterminate by multiplication with a scalar.

Consider $\Omega^{1}(C)$ a free right $C$-module of rank five with generators $dx_j$, $1\leq j \leq 5$. For all $p\in C$ define a left $C$-module structure by
\begin{align}
    pdx_j = &\ (dx_j)\nu_{x_j}(p), \ 1 \leq j \leq 5 \label{relrightmod}.
\end{align}
The relations in $\Omega^{1}(C)$ are given by 
\begin{align*}
    x_jdx_i = -(dx_i)x_j, \text{ for all } 1\leq i,j\leq 5. 
\end{align*}
We want to extend the correspondences 
\begin{equation*}
x_j \mapsto dx_j, \ 1\leq j\leq 5, 
\end{equation*} 
to a map $d: C \to \Omega^{1}(C)$ satisfying the Leibniz's rule. This is possible if it is compatible with the nontrivial relations {\rm (}\ref{relC}{\rm )}, i.e., if the equalities
\begin{align*}
(dx_1)x_2 + x_1dx_2 + (dx_2)x_1 + x_2dx_1 &= (dx_5)x_5+x_5dx_5, \\ 
(dx_1)x_3 + x_1dx_3 + (dx_3)x_1 + x_3dx_1 &= (dx_2)x_2+x_2dx_2, \\ 
(dx_1)x_4 + x_1dx_4 + (dx_4)x_1+ x_4dx_1 &= (dx_3)x_3+x_3dx_3, \\ 
(dx_1)x_5 + x_1dx_5 +(dx_5)x_1+x_5dx_1 &= (dx_4)x_4+x_4dx_4, \\
(dx_2)x_4 +x_2dx_4+ (dx_4)x_2+ x_4dx_2 &= (dx_1)x_1+x_1dx_1, \\ 
(dx_2)x_5 + x_2dx_5 +(dx_5)x_2+x_5dx_2 &= 0, \\
(dx_3)x_4 +x_3dx_4+ (dx_4)x_3+ x_4dx_3 &= 0,  \\
(dx_3)x_5 + x_3dx_5 +(dx_5)x_3+x_5dx_3 &= 0, \\
(dx_4)x_5 + x_4dx_5 +(dx_5)x_4+x_5dx_4 &= 0,\\
(dx_2)x_3 + x_2dx_3 + (dx_3)x_2 + x_3dx_2 &= 0. 
\end{align*}
hold. 

Define $\Bbbk$-linear maps 
\begin{equation*}
\partial_{x_i}: C \rightarrow C, \ 1 \leq i \leq 5,
\end{equation*}

such that

\begin{align*}
   d(a) & =(dx_1)\partial_{x_1}(a)+(dx_2)\partial_{x_2}(a)+(dx_3)\partial_{x_3}(a) \\
   & +(dx_4)\partial_{x_4}(a)+(dx_5)\partial_{x_5}(a), \ {\rm for\ all} \ a \in C.
\end{align*}

Since $dx_i$, $1 \leq i \leq 5$, are free generators of the right $C$-module $\Omega^1(C)$, these maps are well-defined. Note that $d(a)=0$ if and only if $\partial_{x_i}(a)=0$, for all $1\leq i \leq 5$. By using the four relations in {\rm (}\ref{relrightmod}{\rm )} and the definitions of the maps $\nu_{x_i}$, $1\leq i \leq 5$, we get that
\begin{align*}
\partial_{x_1}(x_1^{k_1}x_2^{k_2}x_3^{k_3}x_4^{k_4}x_5^{k_5}) = &\ (-1)^{k_1}x_1^{k_1-1}x_2^{k_2}x_3^{k_3}x_4^{k_4}x_5^{k_5},  \\
\partial_{x_2}(x_1^{k_1}x_2^{k_2}x_3^{k_3}x_4^{k_4}x_5^{k_5}) = &\  (-1)^{k_1+k_2}x_1^{k_1}x_2^{k_2-1}x_3^{k_3}x_4^{k_4}x_5^{k_5},  \\
\partial_{x_3}(x_1^{k_1}x_2^{k_2}x_3^{k_3}x_4^{k_4}x_5^{k_5}) = &\ (-1)^{k_1+k_2+k_3}x_1^{k_1}x_2^{k_2}x_3^{k_3-1}x_4^{k_4}x_5^{k_5},  \\
\partial_{x_4}(x_1^{k_1}x_2^{k_2}x_3^{k_3}x_4^{k_4}x_5^{k_5}) = &\ (-1)^{k_1+k_2+k_3+k_4}x_1^{k_1}x_2^{k_2}x_3^{k_3}x_4^{k_4-1}x_5^{k_5},  \\
\partial_{x_5}(x_1^{k_1}x_2^{k_2}x_3^{k_3}x_4^{k_4}x_5^{k_5}) = &\ (-1)^{k_1+k_2+k_3+k_4+k_5}x_1^{k_1}x_2^{k_2}x_3^{k_3}x_4^{k_4}x_5^{k_5-1}  .
\end{align*}
In this way, $d(a)=0$ if and only if $a$ is a scalar multiple of the identity.

Now, we have that higher-order calculi are defined as follows:
\[
\Omega^2(C) = \left[\bigoplus_{\substack{i,j=1,\\ i <j}}^{5}dx_i\wedge dx_{j} \right]C,
\]
\[
\Omega^3(C) = \left[\bigoplus_{\substack{i,j,k=1,\\ i <j<k}}^{5}dx_i\wedge dx_{j}\wedge dx_k \right]C,
\]
\[
\Omega^4(C) = \left[\bigoplus_{r=1}^{4}dx_1\wedge \cdots dx_{r-1}\wedge dx_{r+1} \wedge \cdots \wedge dx_{5} \right]C.
\]
\[
\Omega^5(C) = dx_1\wedge dx_2 \wedge dx_3 \wedge dx_4 \wedge dx_5
\]

The diagram of Definition \ref{defInt} gives an universal extension of $d$ to higher forms compatible with {\rm (}\ref{relC}{\rm )}. This leads to the following rules for $\Omega^l(C)$  $(2\leq l\leq 4)$:
\begin{align}
\bigwedge_{k=1}^{l}dx_{q(k)} = &\ (-1)^{\sharp}\bigwedge_{k=1}^ldx_{p(k)}
\end{align}

\noindent where 
$$
q:\{1,\ldots,l\}\rightarrow \{1,\ldots,5\}
$$ 

\noindent is an injective map and 
$$
p:\{1,\ldots,l\}\rightarrow \text{Im}(q)$$

\noindent is an increasing injective map and $\sharp$ is the number of $2$-permutations needed to transform $q$ into $p$.

Since $\Omega^5(C) = \omega C\cong C$ as a right and left $C$-module, with $\omega=dx_1\wedge \cdots \wedge dx_5$, where $\nu_{\omega}=\nu_{x_1}\circ\cdots\circ\nu_{x_5}$, we have that $\omega$ is a volume form of $C$. From Proposition \ref{BrzezinskiSitarz2017Lemmas2.6and2.7} (2) we get that $\omega$ is an integral form by setting
\begin{align*}
    \omega_i^j = &\ \bigwedge_{k=1}^{j}dx_{p_{i,j}(k)}, \text{ for } 1\leq i \leq \binom{5}{j-1}, \\
    \bar{\omega}_i^{5-j} = &\ (-1)^{\sharp_{i,j}}\bigwedge_{k=j+1}^{n}dx_{\bar{p}_{i,j}(k)}, \text{ for } 1\leq i \leq \binom{5}{j-1},
\end{align*}
for $1\leq j \leq 5$ and where 
\begin{align*}
    p_{i,j}:\{1,\ldots,j\}\rightarrow &\ \{1,\ldots,5\}, \quad {\rm and} \\
\bar{p}_{i,j}:\{j+1,\ldots,5\}\rightarrow &\ (\text{Im}(p_{i,j}))^c
\end{align*}

\noindent (the symbol $\square^c$ denotes the complement of the set $\square$), are increasing injective maps, and $\sharp_{i,j}$ is the number of $2$-permutation needed to transform 
\[
\left\{\bar{p}_{i,j}(j+1),\ldots, \bar{p}_{i,j}(5), p_{i,j}(1), \ldots, p_{i,j}(j)\right\} \quad {\rm into\ the\ set} \quad \{1, \ldots, 5\}.
\]
Consider $\omega' \in \Omega^j(C)$, that is,  
\begin{align*}
\omega' =\sum_{i=1}^{\binom{5}{j-1}}\bigwedge_{k=1}^{j}dx_{p_{i,j}(k)}b_i, \quad {\rm with} \  n_i \in \Bbbk.
\end{align*}

\noindent Then
\begin{align*}
 \sum_{i=1}^{\binom{5}{j-1}}\omega_{i}^{j}\pi_{\omega}(\bar{\omega}_i^{5-j}\wedge \omega') = &\ \sum_{i=1}^{\binom{5}{j-1}}\left[\bigwedge_{k=1}^{j}dx_{p_i(k)}\right] \cdot  \pi_{\omega} \left[(-1)^{\sharp_{i,j}} \bigwedge_{k=j+1}^{n}dx_{\bar{p}_{i,j}(k)} \wedge \omega'\right] \\
 = &\ \displaystyle \sum_{i=1}^{\binom{5}{j-1}}\bigwedge_{k=1}^{j}dx_{p_{i,j}(k)}n_i =  \omega'.
\end{align*}

By Proposition \ref{BrzezinskiSitarz2017Lemmas2.6and2.7} (2), it follows that $A$ is differentially smooth.
\end{proof}

\begin{remark}
The classification of five-dimensional AS-regular algebras remains an active area of research, and the number of known explicit examples (especially those with five generators) is still quite limited in the literature. Consequently, the fact that the Clifford algebra \( C \) on Theorem \ref{NoClifford} is differentially smooth cannot be interpreted as a general property shared by all AS-regular algebras of global dimension five with five generators. Such smoothness depends intricately on the specific form of the defining relations and on the structure of the algebra’s automorphism group.
\end{remark}

\end{document}